\documentclass[reqno]{amsart}
\usepackage{graphicx}
\usepackage{geometry}
\usepackage[leqno]{amsmath}
\usepackage{amscd}
\usepackage{amsthm}
\usepackage{amsfonts}
\usepackage{amssymb}
\usepackage[pdfencoding=auto,colorlinks=true]{hyperref}
\usepackage{xcolor}
\usepackage{mathrsfs}
\newtheorem{theorem}{Theorem}[section]

\newtheorem{corollary}[theorem]{Corollary}
\newtheorem{proposition}[theorem]{Proposition}
\newtheorem{lemma}[theorem]{Lemma}

\theoremstyle{definition}

\newtheorem{example}[theorem]{Example}

\newtheorem{remark}[theorem]{Remark}
\makeatletter
\newcommand{\leqnomode}{\tagsleft@true}
\newcommand{\reqnomode}{\tagsleft@false}
\makeatother

\DeclareMathOperator{\End}{End}
\DeclareMathOperator{\Aut}{Aut}

\DeclareMathOperator{\GL}{GL}

\DeclareMathOperator{\Perm}{Perm}

\newcommand{\gen}[1]{\langle #1 \rangle} 

\newcommand{\Z}{\mathbb{Z}}

\newcommand{\ak}[1]{{}}
\newcommand{\pt}[1]{{}}

\newcommand{\id}{\mathrm{id}}
\newcommand{\Ab}{\mathrm{Ab}}
\newcommand{\nrm}{\trianglelefteq}
\newtheorem{thmbig}{Theorem}

\newcommand{\fix}{\operatorname{fix} \psi}
\newcommand{\zprod}{\prod_{i\in \Z_n} }

\numberwithin{equation}{section}

\begin{document}
\title{Constructing skew bracoids via abelian maps, and solutions to the {Y}ang-{B}axter equation}
\author{Alan Koch and Paul J.~Truman}
\date{\today}

\begin{abstract} We show how one can use the skew braces constructed using abelian maps to generate families of skew bracoids as defined by Martin-Lyons and Truman. Under certain circumstances, these bracoids give right non-degenerate solutions to the Yang-Baxter equation. 
\end{abstract}
\maketitle

\section{Introduction and Statement of Main Results}

A \textit{skew left brace} is a triple $(G,\cdot,\circ)$ such that $(G,\cdot)$ and $(G,\circ)$ are groups and 
\[g\circ(h\cdot k) = (g\circ h)\cdot g^{-1} \cdot (g\circ k)\]
where $g^{-1}$ is the inverse in $(G,\cdot)$. As is well-known (see, e.g., \cite{SmoktunowiczVendramin18}), skew left braces connect with several areas of mathematics, including Hopf-Galois theory and solutions to the Yang-Baxter equation. In the skew left brace $(G,\cdot,\circ)$ we will refer to $(G,\cdot)$ as the \textit{additive group} and $(G,\circ)$ as the \textit{multiplicative group}. Note that neither group is assumed to be abelian. 

In 2023 the second author, together with Martin-Lyons developed \textit{skew bracoids} (hereafter, \textit{bracoids}), a generalization of skew left braces (hereafter, \textit{braces}). A bracoid is a quintuple $(G,\cdot,N,\star,\odot)$ where $(G,\cdot)$ and $(N,\star)$ are groups, and $G$ acts transitively on $N$ via $\odot$ such that the following \textit{bracoid relation} holds:
\[g \odot (\eta \star \mu) = (g\odot \eta)\star (g\odot e_N)^{-1} \star (g\odot \mu),\ g\in G,\ \eta,\mu \in N.\] Note that one recovers the usual brace relation in the case $G=N$ as sets. Bracoids have applications to Hopf-Galois theory \cite{Martin-LyonsTruman23}, and in some instances can give solutions to the Yang-Baxter equation \cite{ColazzoKochMartin-LyonsTruman24}. Note that the definition of ``bracoid'' here is unrelated to the concept since developed in \cite{ShengTangZhu2024}.

 Braces contain a number of substructures; here, we identify the two most important for the results to follow. Associated to a brace $(G,\cdot,\circ)$ is a homomorphism $\gamma:(G,\circ)\to \Aut(G,\cdot)$ given by $\gamma(g)[h] = g^{-1}(g\circ h)$. If $H\subseteq G$ satisfies $H\le (G,\circ),\ H\nrm(G,\cdot)$, and $\gamma(g)[H]\le H$ for  all $g\in G$ then $H$ is said to be a \textit{strong left ideal} of $(G,\cdot,\circ)$. A strong left ideal $H$ such that $H\nrm (G,\circ)$ is said to be an \textit{ideal} of $(G,\cdot,\circ)$. One can check that if $H$ is a strong left ideal of a brace $(G,\cdot,\circ)$ then the two operations are well-defined on the quotient and we get $(G/H,\cdot,\circ)$ is also a brace.
 
A key technique for constructing bracoids can be found in \cite[Prop. 2.4]{Martin-LyonsTruman23}, where the authors start with a brace $(B,\star,\cdot)$ and take a strong left ideal $A$; doing so produces the bracoid $(B,\cdot,B/A,\star,\odot)$, where $b\odot cA = bcA$.
Thus, a large class of bracoids can be facilitated by identifying the strong left ideals of a known brace. 

Now let $G,G'$ be groups, and let $\psi: G\to G'$ be a homomorphism whose image $\psi(G)$ is abelian. In the case $G'=G$ (as in \cite{Koch21}), such maps are called \textit{abelian maps}, and the set of all such abelian maps is denoted $\Ab(G)$. In the more general setting we consider here, we will continue to use the term ``abelian map'' and we will denote the set of all such maps by $\Ab(G,G')$.

The main result of \cite{Koch21} is to illustrate that $\psi\in \Ab(G)$ gives rise to a binary operation $\circ$ on $G$ defined by
\[g\circ h = g\psi(g^{-1})h\psi(g),\ g,h\in G;\] 
with this definition $(G,\circ)$ is a group and both $(G,\cdot,\circ)$ and $(G,\circ,\cdot)$ form braces. The triple $(G,\cdot,\circ)$ (equivalently, $(G,\circ,\cdot)$) is what Childs calls a \textit{bi-skew brace} in \cite{Childs19}. Associated to $\psi$ is an additional homomorphism (typically not abelian) $\phi: (G,\circ)\to (G,\cdot)$ given by $\phi(g)=g\psi(g^{-1})$ \cite[Prop. 5.5]{Koch22}. This map $\phi$ is crucial in this work, and of course implicitly depends on $\psi$. Note, for example, that $g\circ h = \phi(g)h\psi(g)$.

For $(G,\cdot,\circ)$ any brace, one can form the \textit{opposite brace} $(G,\cdot',\circ)$ where $g\cdot' h = hg$ as shown in \cite{KochTruman20}. Thus, any $\psi\in\Ab(G)$ can give two additional braces, namely $(G,\cdot',\circ)$ and $(G,\circ',\cdot)$. These opposite braces are not typically bi-skew, hence $(G,\cdot',\circ')$ is not in general a brace.

In this work, we seek to connect the theory of abelian maps / bi-skew braces to the construction of bracoids by identifying strong left ideals in the constructed brace. Of course, one difficulty that arises is that the term ``strong left ideal in a bi-skew brace'' is not well-defined, as it depends on which of $(G,\cdot)$ and $(G,\circ)$ is being viewed as the additive group. For clarity, we will differentiate between the two types of strong left ideals when we refer to the brace: a strong left ideal $H$ of $(G,\circ,\cdot)$ will have $H\nrm(G,\circ)$ whereas a strong left ideal of $(G,\cdot,\circ)$ will have $H\nrm (G,\cdot)$. As we will see, it is common for bi-skew braces to have strong left ideals of either type.

In our first main result, we identify precisely the strong left ideals of our bi-skew braces.

\begin{thmbig}\label{biggie}
	Let $(G,\cdot)$ be a group, and $\psi\in\Ab(G)$. Let $H\le G$, and let $C_1$ and $C_2$ be the following two conditions:
	\begin{description}
		\item[$C_1$] $[G,\phi(H)]\le H$;
		\item[$C_2$] $H \nrm G$.
	\end{description}
	Then
	\begin{enumerate}
		\item $H$ is a strong left ideal of the braces $(G,\circ,\cdot)$ and $(G,\circ',\cdot)$ if and only if $C_1$ holds.
		\item $H$ is a strong left ideal of the braces $(G,\cdot,\circ)$ and $(G,\cdot',\circ)$ if and only if $C_2$ holds.
		\item $H$ is an ideal of $(G,\cdot,\circ),\ (G,\cdot,\circ'),$ and $(G,\circ',\cdot)$ if and only if both $C_1$ and $C_2$ hold. 
	\end{enumerate}
\end{thmbig}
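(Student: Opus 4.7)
The plan is to reduce everything to explicit formulas for the $\gamma$-maps of the four braces together with a handful of commutator identities. A direct calculation from $g\circ h=\phi(g)h\psi(g)$ gives $\gamma_1(g)[h] := g^{-1}(g\circ h) = \psi(g)^{-1} h\psi(g)$, so the $\gamma$-map of $(G,\cdot,\circ)$ is $\cdot$-conjugation by $\psi(g)^{-1}$. A short verification (using that $\psi(G)$ is abelian) shows that $\psi$ is also a homomorphism $(G,\circ)\to(G,\cdot)$ with $\psi(g^{-\circ})=\psi(g)^{-1}$, which feeds into $\gamma_2(g)[h] := g^{-\circ}\circ(gh) = \psi(g) h\psi(g)^{-1}$ for $(G,\circ,\cdot)$. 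The opposite-brace formulas for the $\gamma$-maps then yield, after simplification, the crucial identity
\[
\gamma_2'(g)[h] = [g,\phi(h)]\cdot h
\]
in $(G,\cdot)$. I would also record for later reuse the identities $g\circ h\circ g^{-\circ} = [\phi(g),\phi(h)]\cdot h$ (obtained by applying $\phi$ and $\psi$ separately, both of which are homomorphisms on $(G,\circ)$, with $\psi$ having abelian image) and $[\psi(g),\phi(h)] = [\psi(g),h]$ (from $\phi(h)=h\psi(h)^{-1}$ and $\psi(G)$ abelian).

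Part (2) is then routine. Necessity of $C_2$ is built into the definition of a strong left ideal of $(G,\cdot,\circ)$; sufficiency follows because $\gamma_1(g)[H] = \psi(g)^{-1} H\psi(g) \le H$ by normality, while the formulas $h_1\circ h_2 = h_1\cdot\psi(h_1)^{-1} h_2\psi(h_1)$ and $h^{-\circ} = \psi(h) h^{-1}\psi(h)^{-1}$ give $H\le(G,\circ)$. For $(G,\cdot',\circ)$, note that $(G,\cdot')$ has the same subgroups and normal subgroups as $(G,\cdot)$, and the opposite-brace formula gives $\gamma_1'(g)[H] = g\gamma_1(g)[H]g^{-1} \le gHg^{-1} = H$ by $C_2$.

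Part (1) is the crux, and the cleanest route is through the opposite brace $(G,\circ',\cdot)$. Being a strong left ideal there amounts to $H\le(G,\cdot)$, $H\nrm(G,\circ)$, and $\gamma_2'(g)[H]\le H$; by the boxed identity, this last condition says $[g,\phi(h)]\in H$ for all $g\in G$ and $h\in H$, i.e.\ exactly $C_1$. So being a strong left ideal of $(G,\circ',\cdot)$ immediately forces $C_1$. Conversely, assuming $C_1$, the identity $h_1\circ h_2 = h_2\cdot[h_2^{-1},\phi(h_1)]\cdot h_1$ puts $H$ inside $(G,\circ)$; $g\circ h\circ g^{-\circ} = [\phi(g),\phi(h)]h$ gives $H\nrm(G,\circ)$ since $[\phi(g),\phi(h)]\in[G,\phi(H)]\le H$; and $\psi(g)h\psi(g)^{-1} = [\psi(g),h]\cdot h = [\psi(g),\phi(h)]\cdot h\in H$ handles $\gamma_2(g)[H]\le H$. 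To bridge to $(G,\circ,\cdot)$, I would use the opposite-brace relation $\gamma_2'(g)[h] = g\circ\gamma_2(g)[h]\circ g^{-\circ}$: given $H\nrm(G,\circ)$, the $\gamma_2$- and $\gamma_2'$-conditions are equivalent, hence the two strong left ideal conditions coincide.

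Part (3) then follows by combining (1) and (2), since an ideal is a strong left ideal that is also normal in the multiplicative group. The only delicate case is ideal of $(G,\cdot,\circ)$: here one needs $C_2$ together with $H\nrm(G,\circ)$, i.e.\ $[\phi(G),\phi(H)]\le H$, and with $C_2$ in hand the decomposition $[g,\phi(h)] = \phi(g)[\psi(g),h]\phi(g)^{-1}\cdot[\phi(g),\phi(h)]$ promotes this to $C_1$ because both $[\psi(g),h]\in H$ and $\phi(g)$-conjugation are controlled by $C_2$. The main obstacle is the forward direction of (1) for $(G,\circ,\cdot)$: a direct attack via $g=\phi(g)\psi(g)$ runs into the need for $\phi(G)$ to normalize $H$ in $(G,\cdot)$, which is not available under $C_1$ alone. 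The opposite-brace detour above converts this apparent obstruction into the clean commutator identity $\gamma_2'(g)[h] = [g,\phi(h)]h$, and is the essential technical insight.
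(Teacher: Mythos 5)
Your argument is correct, and while the overall style (explicit formulas for the $\gamma$-maps plus commutator identities) matches the paper's Propositions \ref{bigCirc} and \ref{bigDot}, you route the crux of part (1) differently. The paper works with $(G,\circ,\cdot)$ directly: from $g\circ h\circ\overline g=[\phi(g),\phi(h)]h$ and $\gamma_{\circ}(g)[h]=[\psi(g),\phi(h)]h$ it extracts the two containments $[\phi(G),\phi(H)]\le H$ and $[\psi(G),\phi(H)]\le H$, and then recombines them into $C_1$ via the decomposition $g=\psi(g)\phi(g^{-1})^{-1}$ and a further commutator manipulation. You instead pass to the opposite brace $(G,\circ',\cdot)$ and observe that its $\gamma$-function is $(gh)\circ\overline{g}=[g,\phi(h)]h$, so that $\gamma$-stability there is \emph{literally} condition $C_1$; the transfer back to $(G,\circ,\cdot)$ then comes from the general relation $\gamma'(g)[h]=g\circ\gamma(g)[h]\circ\overline{g}$, which is the same equivalence the paper proves by a more hands-on computation. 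This buys you a one-step forward implication in place of the paper's recombination argument, at the cost of having to set up the opposite-brace formalism first; your identity $[g,\phi(h)]=\phi(g)[\psi(g),h]\phi(g)^{-1}[\phi(g),\phi(h)]$ in part (3) is the mirror image of the paper's $[\phi(g),\phi(h)]=g[\psi(g^{-1}),\phi(h)][\phi(h),g^{-1}]g^{-1}$, and parts (2) and (3) otherwise track Proposition \ref{bigDot}. The one place you are terser than you should be is closure of $H$ under $\circ$-inversion when checking $H\le(G,\circ)$ under $C_1$ alone: since $\overline{h}=\phi(h)^{-1}h^{-1}\phi(h)$, you need the (easy) observation that $C_1$ forces $\phi(H)$ to normalize $H$, because $\phi(h)h'\phi(h)^{-1}=[\phi(h),h']h'$ with $[\phi(h),h']=[h',\phi(h)]^{-1}\in H$. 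This is a one-line fix, not a gap, and the paper is equally brief on the point.
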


The proof of Theorem \ref{biggie} is quickly found by combining the results of Propositions \ref{bigCirc} and \ref{bigDot}. Thanks to the aforementioned \cite[Prop. 2.4]{Martin-LyonsTruman23}, identifying strong left ideals allows us to construct bracoids. Corollaries \ref{bigCirccor} and \ref{bigDotcor} quickly give us the following.

\begin{thmbig}
	Let $(G,\cdot)$ be a group, and $\psi\in\Ab(G)$. Let $H\le G$, and let $C_1$ and $C_2$ be as above.	Then
	\begin{enumerate}
		\item If $C_1$ holds, then $(G,\cdot,G/H,\circ,\odot)$ and $(G,\cdot,G/H,\circ',\odot)$ are bracoids.
		\item If $C_2$ holds, then $(G,\circ,G/H,\cdot,\odot)$ and $(G,\circ,G/H,\cdot',\odot)$ are bracoids.
		\item If $C_1$ and $C_2$ both hold, then $(G/H,\cdot,\circ),\ (G/H,\cdot,\circ')$, and $(G/H,\cdot',\circ)$ are braces. 
	\end{enumerate}
\end{thmbig}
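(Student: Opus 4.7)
The plan is to combine Theorem~\ref{biggie}, which characterizes when a subgroup $H$ is a strong left ideal of each of the four braces $(G,\cdot,\circ)$, $(G,\cdot,\circ')$, $(G,\circ,\cdot)$, $(G,\circ',\cdot)$ arising from $\psi$, with \cite[Prop.~2.4]{Martin-LyonsTruman23}, which promotes any strong left ideal of a brace to a bracoid; for the third part I would additionally invoke the standard fact that the quotient of a brace by an ideal is again a brace. The proof is thus essentially a matter of citing the right results in the right order.

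For part (1), I would use Theorem~\ref{biggie}(1) to conclude that condition $C_1$ makes $H$ a strong left ideal of both $(G,\circ,\cdot)$ and $(G,\circ',\cdot)$; applying \cite[Prop.~2.4]{Martin-LyonsTruman23} to each of these braces then produces the bracoids $(G,\cdot,G/H,\circ,\odot)$ and $(G,\cdot,G/H,\circ',\odot)$, with action given by left multiplication of cosets. Part (2) is symmetric: Theorem~\ref{biggie}(2) identifies $C_2$ with $H$ being a strong left ideal of $(G,\cdot,\circ)$ and $(G,\cdot',\circ)$, and \cite[Prop.~2.4]{Martin-LyonsTruman23} applied to these two braces yields the two claimed bracoids. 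For part (3), when both $C_1$ and $C_2$ hold Theorem~\ref{biggie}(3) upgrades $H$ to an actual ideal of $(G,\cdot,\circ)$, $(G,\cdot,\circ')$, and $(G,\circ',\cdot)$, so the three quotients $(G/H,\cdot,\circ)$, $(G/H,\cdot,\circ')$, and $(G/H,\cdot',\circ)$ inherit brace structures.

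There is no substantive obstacle: the mathematical content lives in Theorem~\ref{biggie} (already reduced by the authors to Propositions~\ref{bigCirc} and \ref{bigDot}) and in the off-the-shelf constructions of \cite{Martin-LyonsTruman23}. The only thing to be careful about is the bookkeeping of which operation plays the role of \emph{additive} versus \emph{multiplicative} in each bi-skew brace, and matching this with the conventions of \cite[Prop.~2.4]{Martin-LyonsTruman23}; once that correspondence is in place, the four bracoid statements and the three brace statements follow immediately.
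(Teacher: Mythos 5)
Your proposal is correct and follows essentially the same route as the paper, which derives the theorem directly from Propositions \ref{bigCirc} and \ref{bigDot} (equivalently Theorem \ref{biggie}) together with \cite[Prop.~2.4]{Martin-LyonsTruman23} via Corollaries \ref{bigCirccor} and \ref{bigDotcor}, and uses the quotient-by-an-ideal fact for part (3). The bookkeeping of additive versus multiplicative operations that you flag is indeed the only delicate point, and you have matched the conventions correctly.
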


Finally, we turn our attention to constructing set-theoretic solutions to the Yang-Baxter equation. Recall that a \textit{set-theoretic solution to the Yang-Baxter equation} consists of a set $B$ and a map $R:B\times B\to B\times B$ such that
\[(R\times\id)(\id\times R)(R\times \id) = (\id \times R)(r\times \id)(\id\times R): B^3\to B^3.\]
Write a given  solution as $R(x,y)=(\lambda_x(y),\rho_y(x))$. If each $\lambda_x$ is a bijection we say $R$ is \textit{left non-degenerate}; similarly if each $\rho_y$ is a bijection then $R$ is \textit{right non-degenerate}. A solution that is both left non-degenerate and right non-degenerate will be called \textit{non-degenerate}.

It is well-known that a brace will give a bijective, non-degenerate solution to the Yang-Baxter equation; indeed the inverse to the solution arises by considering the opposite brace \cite[Th. 4.1]{KochTruman20}. Unfortunately, 
it is not known whether every bracoid will give a solution to the YBE. However, here we find a special case in each of the two types of strong left ideals above that allow us to construct solutions which are right non-degenerate using a technique developed in \cite{ColazzoKochMartin-LyonsTruman24}. 

\begin{thmbig}
	Let $(G,\cdot)$ be a group.
	\begin{enumerate}
		\item\label{first} If $\psi\in\Ab(G)$ is idempotent, then 
		\[R(x,y) = (\psi(x)\phi(y)\psi(x^{-1}),\psi(x)\phi(y)^{-1}\psi(x^{-1})xy)\]
		is a right non-degenerate solution to the Yang-Baxter equation.
		\item\label{second} If $G= G_1\times G_2$ and $\alpha\in\Ab(G_1,G_2),\ \beta\in\Ab(G_2,G_1)$ then 
		\[R((x_1,x_2),(y_1,y_2)) = \left(\lambda_{(x_1,x_2)}((y_1,y_2)),\rho_{(y_1,y_2)}((x_1,x_2))\right)\]
		is a right non-degenerate solution to the Yang-Baxter equation, where 
				\begin{align*}\lambda_{(x_1,x_2)}(y_1,y_2) &= (e,\alpha(x_1^{-1})y_2\alpha(x_1))\\
			\rho_{(y_1,y_2)}(x_1,x_2)&=\left(\beta(y_2)x_1\beta(x_2^{-1})y_1\beta(x_2y_2^{-1}),\alpha(x_1)^{-1}y_2^{-1}\alpha(x_1)x_2\alpha(x_1)^{-1}y_2\alpha(x_1)\right).
		\end{align*}
	\end{enumerate}
\end{thmbig}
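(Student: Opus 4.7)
The strategy is uniform for both parts: identify a strong left ideal $H$ via Theorem~\ref{biggie}, obtain the associated bracoid from the preceding theorem, and then apply the construction of \cite{ColazzoKochMartin-LyonsTruman24}, which extracts right non-degenerate YBE solutions from bracoids equipped with a suitable transversal.

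For part (1), I would take $H=\psi(G)$. Idempotence of $\psi$ yields $\phi(\psi(g)) = \psi(g)\psi(\psi(g^{-1})) = \psi(g)\psi(g^{-1}) = e$ for every $g\in G$, so $\phi(H)=\{e\}$ and condition $C_1$ holds trivially; hence $(G,\cdot,G/H,\circ,\odot)$ is a bracoid. Idempotence also supplies a canonical set-theoretic factorisation $G=\ker(\psi)\cdot H$ with trivial intersection (every $y$ decomposes as $y=\phi(y)\psi(y)$), so $\ker(\psi)$ is a transversal for $H$ and the associated projection is exactly $\phi$. Feeding this bracoid together with this transversal into the machinery of \cite{ColazzoKochMartin-LyonsTruman24}, and unwinding using $g\circ h=\phi(g)h\psi(g)$, produces the stated formula for $R$. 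The identity $\lambda_x(y)\rho_y(x)=xy$, which can be verified by direct cancellation in $(G,\cdot)$, serves as a useful sanity check and also provides an explicit inverse to $\rho_y$.

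For part (2), I would define $\psi\in\Ab(G)$ by $\psi(x_1,x_2)=(\beta(x_2),\alpha(x_1))$. This is a homomorphism whose image is contained in the abelian subgroup $\beta(G_2)\times\alpha(G_1)$, so $\psi$ is an abelian map, and the associated map is $\phi(x_1,x_2)=(x_1\beta(x_2^{-1}),x_2\alpha(x_1^{-1}))$. Take $H=G_1\times\{e\}$; this is normal in $G$, so $C_2$ holds, and Theorem B produces the bracoid $(G,\circ,G/H,\cdot,\odot)$. The natural complement $\{e\}\times G_2\cong G/H$ serves as a transversal for $H$, and \cite{ColazzoKochMartin-LyonsTruman24} produces the solution; careful tracking of the $\alpha$-twisting in the $G_2$-coordinate and the $\beta$-twisting in the $G_1$-coordinate (both arising from the formulas for $\phi$ and $\psi$) yields the stated expressions for $\lambda$ and $\rho$.

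In both parts, the conceptual content is entirely carried by Theorem~\ref{biggie} and the bracoid-to-YBE construction, and right non-degeneracy is inherited from the cited result. The main obstacle is purely computational: the expressions, particularly for $\rho$ in part (2), are intricate, and matching them requires patient bookkeeping of the order in which $\cdot,\circ,\phi,\psi,\alpha,\beta$ and the transversal projection are applied, along with the identification of $G/H$ with the chosen complement.
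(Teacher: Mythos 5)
Your plan matches the paper's proof: part (1) is Proposition \ref{YBE1} (your $H=\psi(G)$ equals the paper's $H=\fix$ when $\psi$ is idempotent, and your transversal $\ker\psi=\phi(G)$ is exactly the regular subgroup the paper uses to show the bracoid $(G,\cdot,\phi(G),\cdot,\odot)$ ``contains a brace'' before invoking \cite[Prop. 4.2]{ColazzoKochMartin-LyonsTruman24}), while part (2) is Proposition \ref{YBE2} with the same $\psi$, the same ideal $G_1$, and the same complement $\{e\}\times G_2$. The only points you leave implicit --- that each transversal is a subgroup of the relevant acting group ($(G,\cdot)$ in (1), $(G,\circ)$ in (2)) acting regularly on the quotient, and the explicit unwinding of $\lambda$ and $\rho$ via $\lambda_x(y)=(x\odot e)^{-1}(x\odot(y\odot e))$ and $\rho_y(x)=\overline{\lambda_x(y)}\circ x\circ y$ --- are carried out in the paper exactly along the lines you describe.
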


These are proven to be solutions in Propositions \ref{YBE1} and \ref{YBE2}. Neither will be left non-degenerate except in very extreme circumstances. Solutions of type (\ref{first}) will be left non-degenerate if and only if $\psi(g)\ne g$ for all $g\ne e$--that is, $\psi$ is fixed-point free in the sense of \cite{Childs13}, Solutions of type (\ref{second}) will be left non-degenerate if and only if $G_1$ is trivial. 

Throughout, given a bi-skew brace $(G,\cdot,\circ)$ (equivalently,  $(G,\circ,\cdot)$), for $g\in G$ we will denote its inverse in $(G,\cdot)$ by $g^{-1}$ and its inverse in $(G,\circ)$ by $\overline g$. We will denote the identity (which is common to both operations) by $e$, and we will typically write $gh$ for $g\cdot h$. We write $[g,h]=ghg^{-1}h^{-1}$ for the commutator of $g$ and $h$ in $(G,\cdot)$.

 While the theory of abelian maps will work for any group, if $(G,\cdot)$ is itself is abelian then the construction will always yield the trivial brace $(G,\cdot,\cdot)$, so we will implicitly assume $(G,\cdot)$ is nonabelian throughout.

\section{Strong left ideals of $(G,\circ,\cdot)$}

In this section, we identify the strong left ideals of the braces $(G,\circ,\cdot)$ which arise from choosing an abelian map $\psi\in\Ab(G)$. Recall that $\phi:(G,\circ) \to (G,\cdot)$ is a homomorphism defined by $\phi(g) = g\psi(g^{-1})$, and with this notation we may write $g\circ h = g\psi(g^{-1})h\psi(g)=\phi(g)h\psi(g^{-1})$.  

\begin{proposition}\label{bigCirc}
	Let $\psi\in\Ab(G)$, and suppose that $H\le (G,\cdot)$. Then the following are equivalent:
	\begin{enumerate}
		\item \label{un}$H$ is a strong left ideal of $(G,\circ,\cdot)$;
		\item \label{deux}$H$ is a strong left ideal of $(G,\circ',\cdot)$;
		\item \label{trois}$[G,\phi(H)]\le H$.
	\end{enumerate} 
\end{proposition}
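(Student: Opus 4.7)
My plan is to route the equivalence through two explicit commutator identities. First, using $g\circ h = \phi(g) h \psi(g)$ together with the facts that $\phi\colon(G,\circ)\to(G,\cdot)$ is a homomorphism and $\psi(g\circ h)=\psi(g)\psi(h)$ (the latter because $\psi(G)$ is abelian, so the $\psi^2(g)$ factors that appear cancel), I would verify
\[
g \circ h \circ \overline{g} = [\phi(g),\phi(h)]\, h, \qquad (gh) \circ \overline{g} = [g,\phi(h)]\, h,
\]
for arbitrary $g,h\in G$. The first identity converts the normality condition $H\nrm(G,\circ)$ into $[\phi(G),\phi(H)]\le H$; the second will be the workhorse for the $\gamma$-invariance.

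Second, I would invoke the standard cosetwise reformulation: for the brace $(G,\circ,\cdot)$ the $\gamma$-invariance $\gamma(g)[H]\le H$, with $\gamma(g)[h]=\overline{g}\circ(gh)$, says exactly that $gH\subseteq g\circ H$ for every $g$. Assuming also $H\nrm(G,\circ)$ so that $g\circ H=H\circ g$, this becomes $(gh)\circ\overline{g}\in H$ for all $g\in G,\ h\in H$, which by the second identity is precisely $[g,\phi(h)]\in H$, i.e.\ condition (\ref{trois}). Hence (\ref{un}) is equivalent to the conjunction of $H\nrm(G,\circ)$ and (\ref{trois}). The opposite brace $(G,\circ',\cdot)$ is handled analogously: $H\nrm(G,\circ')\Leftrightarrow H\nrm(G,\circ)$, and a parallel calculation of the associated $\gamma'$ gives $\gamma'(g)[h]=(gh)\circ\overline{g}=[g,\phi(h)]\,h$ outright, so the $\gamma'$-invariance is already condition (\ref{trois}).

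To close the cycle I verify that (\ref{trois}) alone implies both $H\nrm(G,\circ)$ and the $\gamma$-invariance. The former is immediate: $[\phi(G),\phi(H)]\le [G,\phi(H)]\le H$. For the latter, $[\psi(g),h]=[\psi(g),\phi(h)\psi(h)]=[\psi(g),\phi(h)]$, since $\psi(h)$ and $\psi(g)$ both lie in the abelian image of $\psi$ and hence commute, and this element lies in $[G,\phi(H)]\le H$. The main obstacle is resisting a direct attack on (\ref{un})$\Rightarrow$(\ref{trois}) via $g=\phi(g)\psi(g)$ and expansion of $[g,\phi(h)]$ as a product of commutators: that route produces a factor $\phi(g)[\psi(g),h]\phi(g)^{-1}$ that would require $\phi(G)$ to normalise $H$ in $(G,\cdot)$, and this is not available from the hypotheses. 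The cosetwise reformulation bypasses the difficulty by packaging the normality and $\gamma$-invariance conditions into the single statement $(gh)\circ\overline g\in H$, where the second identity does all the work.
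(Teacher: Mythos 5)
Your proof is correct, but it reaches the key equivalence by a genuinely different route than the paper. The paper proves (\ref{un})$\Leftrightarrow$(\ref{deux}) by an abstract opposite-brace argument, computes $\gamma_{\circ}(g)[h]=\overline{g}\circ(gh)=[\psi(g),\phi(h)]h$ directly, and then--for (\ref{un})$\Rightarrow$(\ref{trois})--must combine the two containments $[\phi(G),\phi(H)]\le H$ and $[\psi(G),\phi(H)]\le H$ via the decomposition $g=\psi(g)\phi(g^{-1})^{-1}$ and a delicate recombination $[g,\phi(h)]=[\psi(g),\phi(h_1)]h_1h_2$ (precisely to dodge the conjugation-by-$\phi(g)$ obstacle you correctly identify at the end). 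Your cosetwise reformulation sidesteps all of that: once normality converts $\gamma$-invariance into $(gh)\circ\overline{g}\in H$, the single identity $(gh)\circ\overline{g}=[g,\phi(h)]h$ delivers $[G,\phi(H)]\le H$ in one stroke, and the same identity \emph{is} the $\gamma'$-map of the opposite brace, so (\ref{deux})$\Leftrightarrow$(\ref{trois}) comes for free rather than via a separate (\ref{un})$\Leftrightarrow$(\ref{deux}) argument. Both identities you rely on check out, and the logical cycle closes: (\ref{un}) is [normality and (\ref{trois})], and (\ref{trois}) alone forces normality via $[\phi(G),\phi(H)]\le[G,\phi(H)]\le H$. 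One small expository wrinkle: your final verification that (\ref{trois}) implies $\gamma$-invariance via $[\psi(g),h]=[\psi(g),\phi(h)]$ is both redundant (it already follows from your equivalence plus normality) and, as written, incomplete--it silently uses $\gamma_{\circ}(g)[h]=\psi(g)h\psi(g^{-1})=[\psi(g),h]h$, which you never state; either drop that sentence or add the identity. Overall your argument is shorter and, to my eye, cleaner than the paper's.
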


\begin{proof}
	
	First, we establish the equivalence of (\ref{un}) and (\ref{deux}). Suppose $H\le(G,\cdot)$ is a strong left ideal of $(G,\circ,\cdot)$. Then $H\nrm(G,\circ)$ and the  map $\gamma_{\circ}:(G,\cdot)\to\Aut(G,\circ)$ given by $\gamma_{\circ}(g)[h] = \overline{g}\circ(gh)$ satisfies $\gamma_{\circ}(g)[H]\subseteq H$ for all $g\in G$. (Note that $\gamma_{\circ}$ looks different from the $\gamma$ given in the definition since we are working with $\circ$ as our additive group here.) As $g\circ' h \circ' \overline g = \overline g \circ h \circ g$ the fact that $H\nrm(G,\circ)$ quickly gives that $H$ is normal in $(G,\circ')$. Now let $g\in G,h\in H$. Then $\overline g\circ (gh) = h'$ for some $h\in H$. Then
	\[\overline g \circ' (gh) = gh \circ \overline g = g\circ \overline g \circ gh \circ \overline g=g\circ h' \circ \overline g \in H,\]
	thus $\overline g \circ' (gh)\in H$ as well and $H$ is a strong left ideal of $(G,\circ',\cdot)$. Interchanging $\circ$ and $\circ'$ shows that (\ref{un}) and (\ref{deux}) are equivalent.

	It remains to show that (\ref{un}) and (\ref{trois}) are equivalent. 
	Let $g\in G,\ h \in H$. It is easy to verify that $\overline g = \psi(g)g^{-1}\psi(g^{-1}) = \phi(g)^{-1}\psi(g^{-1})$, and hence we get
	\begin{align*}
		g\circ h \circ \overline g &= \phi(g)\phi(h)\overline g \psi(h)\psi(g)\\
		&= \phi(g)\phi(h)\phi(g)^{-1}\psi(g^{-1})\psi(h)\psi(g) \\
		&= \phi(g)\phi(h)\phi(g)^{-1}\psi(h)\\
		&= \phi(g)\phi(h)\phi(g)^{-1}\phi(h)^{-1}h\\
		&=[\phi(g),\phi(h)]h,
	\end{align*}
	which is in $H$ if and only if $[\phi(g),\phi(h)]\in H$. 
	
	Also,
	\begin{align*}
		\gamma_{\circ}(g)[h] &= \overline{g}\circ(gh)\\
		&=\phi(\overline g)gh\psi(\overline g)\\
		&=\phi(g)^{-1}gh\psi( \psi(g)g^{-1}\psi(g^{-1})) \tag{$\phi:(G,\circ)\to(G,\cdot)$ homomorphism }\\
		&=\phi(g)^{-1}gh\psi(g^{-1}) \tag{$\psi\in\Ab(G)$ }\\
		&=\phi(g)^{-1}gh\psi(h^{-1}g^{-1}h) \tag{$\psi\in\Ab(G)$}\\
		&= \psi(g)\phi(h)\psi(g^{-1})\psi(h)h^{-1}h\\
		&=[\psi(g),\phi(h)]h,
	\end{align*}
	which is in $H$ if and only if $[\psi(g),\phi(h)]\in H$.
	
	Of course, if $[G,\phi(H)]\le H$ then $[\phi(g),\phi(h)],\ [\psi(g),\phi(h)]\in H$ and $H$ is a strong left ideal of $(G,\circ,\cdot)$ and so (\ref{trois}) implies (\ref{un}). Conversely, if $H$ is a strong left ideal of $(G,\circ,\cdot)$ then   $[\phi(G),\phi(H)]$ and $[\psi(G),\phi(H)]$ are both contained in $H$, hence
	\[
		[g,\phi(h)] = [\psi(g)\phi(g^{-1})^{-1},\phi(h)]
		= \psi(g)[\phi(g^{-1})^{-1},\phi(h)]\psi(g^{-1})[\psi(g),\phi(h)].
	\]
	If we let $h_1= [\phi(g^{-1})^{-1},\phi(h)]$ and $h_2=[\psi(g),\phi(h)]$ then $h_1,h_2\in H$ and
	\begin{align*}
		[g,\phi(h)] &= \psi(g)h_1\psi(g^{-1})h_2	\\
		&= \psi(g)h_1\psi(h_1^{-1}g^{-1}h_1)h_1^{-1}h_1h_2\\
		&= [\psi(g),\phi(h_1)]h_1h_2\in H
	\end{align*}
	and hence $[G,\phi(H)]\le H$ as desired. This establishes the equivalence of (\ref{un}) and (\ref{trois}) and we are done. 
	\end{proof}

One the strong left ideals have been identified, \cite[Prop. 2.4]{Martin-LyonsTruman23} then gives the following.

\begin{corollary}\label{bigCirccor}
	Let $\psi\in\Ab(G)$, $H\le G$, and suppose that $[G,\phi(H)]\le H$. Define an action $\odot$ of $G$ on $G/H$ by $g\odot xH = gxH$. Then $(G,\cdot,G/H,\circ,\odot)$ and $(G,\cdot,G/H,\circ',\odot)$ are bracoids.
\end{corollary}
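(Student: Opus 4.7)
The plan is simply to assemble two ingredients that are already in hand. First, Proposition~\ref{bigCirc} has just established that the hypothesis $[G,\phi(H)]\le H$ is equivalent to $H$ being a strong left ideal of both $(G,\circ,\cdot)$ and $(G,\circ',\cdot)$. Second, the cited construction \cite[Prop.~2.4]{Martin-LyonsTruman23} takes any brace $(B,\star,\cdot)$ together with a strong left ideal $A$ and outputs the bracoid $(B,\cdot,B/A,\star,\odot)$ with action $b\odot cA=bcA$. Feeding the two braces from Proposition~\ref{bigCirc} into this machine, with the common strong left ideal $H$, produces exactly the two bracoids claimed.

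Concretely, I would proceed in two short steps. Applying \cite[Prop.~2.4]{Martin-LyonsTruman23} to $(G,\circ,\cdot)$ with $A=H$ (legitimate since $H\nrm(G,\circ)$ by Proposition~\ref{bigCirc}(\ref{un})) yields the bracoid $(G,\cdot,G/H,\circ,\odot)$, where the cosets $xH$ are understood as $\circ$-cosets and $g\odot xH=(g\cdot x)H$. Applying the same result to $(G,\circ',\cdot)$ with $A=H$ (legitimate by Proposition~\ref{bigCirc}(\ref{deux})) yields $(G,\cdot,G/H,\circ',\odot)$, with cosets now interpreted as $\circ'$-cosets.

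The only thing one might pause over is that the action is written uniformly as $g\odot xH=gxH$ in both cases, even though the coset $xH$ has two different meanings. This is harmless: since $g\circ' h=h\circ g$, the subgroup $H$ is normal in $(G,\circ)$ if and only if it is normal in $(G,\circ')$, and moreover the two families of cosets coincide setwise (a $\circ$-coset of $H$ equals the corresponding $\circ'$-coset), so writing $xH$ without decoration is unambiguous. There is no real obstacle to the proof; the substantive work was already done in Proposition~\ref{bigCirc}, and the corollary is essentially a citation of \cite[Prop.~2.4]{Martin-LyonsTruman23}.
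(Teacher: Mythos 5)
Your argument is correct and is exactly the paper's: the corollary is obtained by feeding the two strong left ideals identified in Proposition~\ref{bigCirc} into \cite[Prop.~2.4]{Martin-LyonsTruman23}, and the paper itself offers no more proof than that one-sentence citation. The only divergence is in your supplementary coset discussion. You verify that $\circ$-cosets of $H$ coincide with $\circ'$-cosets, which is true, but it is not quite the identification the statement requires: the corollary writes the action as $g\odot xH=gxH$ with $xH$ a coset for the original operation $\cdot$, so what one actually needs is that the $\circ$-coset $x\circ H$ produced by \cite[Prop.~2.4]{Martin-LyonsTruman23} equals the $\cdot$-coset $xH$. This is not automatic; the paper settles it in Remark~\ref{cosetshmoset}, which computes $y\circ H=\{y[\psi(y^{-1}),\phi(h)]h:h\in H\}=yH$ using precisely the hypothesis $[G,\phi(H)]\le H$. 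Without that step the bracoid delivered by the cited proposition lives on $(G,\circ)/H$ rather than on the set of $\cdot$-cosets named in the statement; this is a small repair, not a flaw in the overall strategy.
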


\begin{remark}\label{cosetshmoset}
As $H$ is a strong left ideal of $(G,\circ,\cdot)$ above our factor group should be $(G,\circ)/H$, hence a coset should be of the form $y\circ H$ for $y\in G$. However, 
\begin{align*}
	y\circ H &= \{y\circ h: h\in H\}\\
			&= \{y\psi(y^{-1})h\psi(y):h\in H\}\\
			&= \{y\psi(y^{-1})h\psi(h^{-1}yh)h^{-1}h:h\in H\}\\	
			&= \{y\psi(y^{-1})\phi(h)\psi(y)\phi(h)^{-1}h: h\in H\}		\\
			&= \{y[\psi(y^{-1}),\phi(h)]h:h\in H\}\\
			&=yH
\end{align*}	
since $[\psi(y^{-1}),\phi(h)]h\in H$.
\end{remark}

\begin{remark}\label{whocares}
	We have seen that for any $H$ such that $[G,\phi(H)]\le H$ we have $H\nrm (G,\circ)$. If in addition $H\nrm (G,\cdot)$ then $H$ is an ideal of the brace $(G,\circ,\cdot)$, and as a consequence $(G/H,\circ,\cdot)$ is also a brace. In the first bracoid $(G,\cdot,G/H,\circ,\odot)$ constructed above, notice that for $h\in H\nrm (G,\cdot)$ we have \[h\odot xH = (hxH)=x(x^{-1}hx)H = xH\] and hence $H$ acts trivially on $G/H$ and $\odot$ is not a faithful action, i.e., the bracoid is not \textit{reduced}: see \cite[Def. 2.14]{Martin-LyonsTruman23}. However, since $H$ acts trivially we get an induced action $G/H$ on $G/H$ by $gH\odot xH = gxH$ giving a reduced bracoid $(G/H,\cdot,G/H,\circ,\odot)$ which is the brace above. 
\end{remark}

Thus, the construction of strong left ideals (and their corresponding bracoids) reduces to finding subgroups of $(G,\circ)$ satisfying the commutator condition above. While many subgroups will not have this property, we present some general examples which do.

\begin{example}\label{norm}
	Let $G$ be any group, $\psi\in \Ab(G)$. Let $H=\ker \psi$. Then $\phi(H)=\{h\psi(h^{-1}):h\in H\} = H$ and $[G,\phi(H)]=[G,H]\le H$ since $\ker\psi\nrm G$. Thus $(G,\cdot,G/H,\circ,\odot)$ is a bracoid. However, since $H\nrm G$ we see that this bracoid reduces to the brace $(G/H,\circ,\cdot)$.
\end{example}

\begin{example}\label{fix}
	Let $G$ be any group, $\psi\in \Ab(G)$. Let $ H=\fix=\{h\in G: \psi(h)=h\}$. Clearly, $H=\ker\phi$, and hence $[G,\phi(H)]=\{e\}\le H$ and so $H$ is a strong left ideal of $(G,\circ,\cdot)$ and $(G,\cdot,G/H,\circ,\odot)$ is a bracoid. As $\phi:(G,\circ)\to(G,\cdot)$ is a homomorphism, we may identify $G/H=(G,\circ)/\ker\phi$ with $\phi(G)\le(G,\cdot)$ via the induced isomorphism $\widetilde \phi(xH)=\phi(x)$. In doing so we obtain an action $\odot'$ of $G$ on $\phi(G)$, namely
	\[g \odot' \phi(x) = \widetilde\phi(g\odot \widetilde\phi^{-1}\phi(x))=\widetilde\phi(g\odot xH) =\widetilde\phi(gxH)=\phi(gx).\]
		The result is the bracoid $(G,\cdot,\phi(G),\cdot,\odot)$ with $g\odot\phi(x)=\phi(gx)$.
	
	In contrast to Example \ref{norm}, it is not necessarily the case that our subgroup $\fix$ is normal in $(G,\cdot)$. In fact, for $g\in G,\ h\in \fix$ we have $ghg^{-1}\in\fix$ if and only if $\psi(ghg^{-1})=ghg^{-1}$. But $\psi(ghg^{-1})=\psi(h)=h$, so $\fix\nrm G$ if and only if $\fix\le Z(G)$ where $Z(G)$ is the center of $G$.
\end{example}

\begin{example}
	Generalizing Example \ref{fix}, let 
	\[\widehat H = \{h\in G: \phi(h)\in Z(G)\}.\]
	Then $\widehat H\le G$ and $\fix\le \widehat H$. Since $[G,\phi(\widehat H)]\le[G,Z(G)] = \{e\}\le \widehat H$ we get the bracoid $(G,\cdot,G/\widehat H,\circ,\odot)$. 
\end{example}
\begin{example}
		To provide a concrete example of each of the above, let $G=D_4=\gen{r,s:r^4=s^2=rsrs=e}$ and define $\psi:G\to G$ by $\psi(r)=rs,\psi(s)=e$. In this case, $\ker \psi=\gen{r^2,s},\ \fix = \gen{rs}$, and $\widehat H=\gen{r^2,rs}$.
\end{example}	

\begin{example}
	Let $H_1\le \fix$, and let $H=\ker\psi H_1$. Then $[G,\phi(H)] = [G,\ker\psi]$

	Some concrete examples of this can be found by adapting \cite[Ex. 2.4]{ColazzoKochMartin-LyonsTruman24}.
	\[G= \gen{x,y,z:x^{pq}=y^2=z^2=e,\ yxy=zxz=x^{-1},yz=zy}\cong C_{pq}\rtimes (C_2\times C_2)\]
	where $2<p<q$ are prime. The map $\psi:G\to G$ by $\psi(x^iy^jz^k)=y^jz^k$ is an endomorphism, and since $\psi(G)=C_2\times C_2$ we see that $\psi\in\Ab(G)$. Note $\ker \psi = \gen x$ and $\fix = \gen{y,z}$. By taking $H_1=\gen y,\ \gen z$, and $\gen{yz}$ we get three strong left ideals that are not found using the previous examples. 
\end{example}

\section{Many bracoids from $H=\fix$ and solutions to the {Y}ang-{B}axter equation}

In this section we will develop Example \ref{fix} a bit more. We will show how we can use \textit{brace blocks} to construct a (potentially large) family of bracoids from a single $\psi\in \Ab(G)$. Also we will show that, if we further insist that $\psi$ is idempotent, we obtain right non-degenerate solutions to the Yang-Baxter equation.

Let $\psi\in\Ab(G)$. We define a sequence of maps $\psi_n\in\Ab(G)$ recursively as follows: $\psi_0$ is trivial, and
\[\psi_n(g)=\psi(g)\psi_{n-1}(\phi(g)),\ n\ge 1, g\in G.\]
We immediately see that $\psi_1=\psi$. These maps are crucial to the work found in \cite{Koch22}, where it is shown that each $\psi_n$ is in fact an abelian map. Furthermore, if we define a family of binary operations $\{\circ_n:n\ge 0\}$ by \[g\circ_n h = g\psi_{n}(g^{-1})h\psi_n(g),\ g,h\in G\]
then for all $m,n\ge 0$ we have $(G,\circ_m,\circ_n)$ is a bi-skew brace. Each bi-skew brace constructed above is the case $m=0,n = 1$ (or vice versa).

We have seen that $H=\fix$ is a strong left ideal of $(G,\circ,\cdot)$. However, we can also show
\begin{proposition}\label{proppy}
	With notation as above, $H=\fix$ is a strong left ideal of $(G,\circ_n,\cdot)$ for all $n\ge 0$.
\end{proposition}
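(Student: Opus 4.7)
The plan is to reduce the claim for each $n$ to the criterion established in Proposition \ref{bigCirc}, applied not to $\psi$ but to the abelian map $\psi_n$. Since $\psi_n\in\Ab(G)$ has associated homomorphism $\phi_n\colon (G,\circ_n)\to(G,\cdot)$ given by $\phi_n(g)=g\psi_n(g^{-1})$, that proposition tells us that $H\le(G,\cdot)$ is a strong left ideal of $(G,\circ_n,\cdot)$ precisely when $[G,\phi_n(H)]\le H$. So the whole task reduces to controlling $\phi_n$ on $H=\fix$.

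The key observation is that $\phi_n$ vanishes identically on $\fix$ for every $n\ge 1$. Indeed, as already noted in Example \ref{fix}, one has $\fix=\ker\phi$: for $h\in\fix$, $\phi(h)=h\psi(h^{-1})=hh^{-1}=e$. Feeding this into the recursion $\psi_n(g)=\psi(g)\psi_{n-1}(\phi(g))$ and using that $\psi_{n-1}$ is a homomorphism (so $\psi_{n-1}(e)=e$), one immediately obtains $\psi_n(h)=\psi(h)\cdot\psi_{n-1}(e)=\psi(h)=h$, and hence $\phi_n(h)=h\psi_n(h^{-1})=hh^{-1}=e$. In particular the recursion for $\psi_n$ collapses on $\fix$ without any inductive bookkeeping.

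Consequently $\phi_n(H)=\{e\}$, so the commutator condition $[G,\phi_n(H)]=\{e\}\le H$ holds trivially, and Proposition \ref{bigCirc} applied to $\psi_n$ completes the argument for $n\ge 1$. The case $n=0$ is essentially degenerate: one has $\circ_0=\cdot$ and $\phi_0=\id$, so the criterion collapses to $H\nrm G$ and would be handled (or set aside) as a separate trivial case.

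I do not foresee any serious obstacle. The only conceptual ingredient is the identification $\fix=\ker\phi$, after which the recursion defining $\psi_n$ trivializes on $H$ and the proof is a one-line appeal to Proposition \ref{bigCirc}.
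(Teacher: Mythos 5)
For $n\ge 1$ your argument is correct and follows essentially the paper's own route: reduce to the criterion $[G,\phi_n(H)]\le H$ of Proposition \ref{bigCirc} applied to the abelian map $\psi_n$, then show that $\phi_n$ annihilates $\fix$. The only difference is cosmetic. The paper first proves the global identity $\phi_n=\phi^n$ (Lemma \ref{lemmy}) by induction and then uses $\phi(\fix)=\{e\}$ to conclude $\phi^n(\fix)=\{e\}$, whereas you collapse the recursion directly on $H$ via $\psi_n(h)=\psi(h)\psi_{n-1}(\phi(h))=\psi(h)=h$ for $h\in\fix$ (and likewise for $h^{-1}$, which lies in $\fix$ since $\psi$ is a homomorphism). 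Your shortcut is valid and slightly more economical for this proposition, though the lemma $\phi_n=\phi^n$ is still needed elsewhere in the paper, e.g.\ to identify the quotient in the ensuing corollary with $\phi^n(G)$.

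The genuine problem is your handling of $n=0$, which you defer as ``a separate trivial case.'' It is not trivial, and in general it fails. As you correctly observe, $\circ_0=\cdot$ and $\phi_0=\id$, so the criterion of Proposition \ref{bigCirc} becomes $[G,H]\le H$, i.e.\ $\fix\nrm(G,\cdot)$. But Example \ref{fix} shows that $\fix\nrm G$ if and only if $\fix\le Z(G)$, and this can fail: for $G=D_4$ with $\psi(r)=rs$, $\psi(s)=e$ one has $\fix=\gen{rs}$, a non-central and non-normal subgroup. So the $n=0$ instance of the proposition needs either the extra hypothesis $\fix\le Z(G)$ or a restriction to $n\ge 1$. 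To be fair, the paper's own proof has the same blind spot, since it asserts $\phi^n(H)=\{e\}$ for all $n\ge 0$, which is false at $n=0$ where $\phi^0(H)=H$. Either way, you should not label the $n=0$ case trivial; flag it as an exception rather than setting it aside.
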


To prove this, we first require a lemma.

\begin{lemma}\label{lemmy}
	Let $\psi\in\Ab(G)$, and let $\{\psi_n:n\ge 0\}$ be the abelian maps constructed as above. For each $n\ge 0$ let $\phi_n(g)=g\psi_n(g^{-1})$. Then $\phi_n=\phi^n$ (where $\phi_1=\phi$ as usual).
\end{lemma}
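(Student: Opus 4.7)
The plan is to induct on $n$, establishing the slightly stronger identity $\phi_n=\phi_{n-1}\circ\phi$ (composition of functions on $G$); combined with the base cases $\phi_0=\id$ (which follows from $\psi_0$ being trivial) and $\phi_1=\phi$ (which is the definition), this gives $\phi_n=\phi^n$ immediately.

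For the inductive step, I would unwind one application of the recursive definition:
\[
\phi_n(g) = g\psi_n(g^{-1}) = g\psi(g^{-1})\psi_{n-1}(\phi(g^{-1})) = \phi(g)\,\psi_{n-1}(\phi(g^{-1})),
\]
and set this against
\[
\phi_{n-1}(\phi(g)) = \phi(g)\,\psi_{n-1}(\phi(g)^{-1}).
\]
Thus the only thing left to verify is the identity $\psi_{n-1}(\phi(g^{-1})) = \psi_{n-1}(\phi(g)^{-1})$.

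This last step is the one subtle point. Since $\phi$ is a homomorphism from $(G,\circ)$ to $(G,\cdot)$ and \emph{not} a homomorphism of $(G,\cdot)$, one cannot simply rewrite $\phi(g^{-1})$ as $\phi(g)^{-1}$. A direct computation gives $\phi(g^{-1}) = g^{-1}\psi(g)$ while $\phi(g)^{-1} = \psi(g)g^{-1}$, so the two elements are genuinely different in general. However, applying the homomorphism $\psi_{n-1}$ to either expression produces the same two factors $\psi_{n-1}(g^{-1})$ and $\psi_{n-1}(\psi(g))$, merely written in opposite orders. Because $\psi_{n-1}\in\Ab(G)$, these factors lie in an abelian subgroup of $G$ and therefore commute, so the two expressions agree.

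The main obstacle is exactly this asymmetry between $\phi(g^{-1})$ and $\phi(g)^{-1}$: it is the hypothesis that $\psi_{n-1}(G)$ is abelian that dissolves it, which is the point at which the whole abelian-map framework earns its keep. Everything else is bookkeeping through the recursion.
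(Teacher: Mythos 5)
Your proof is correct and takes essentially the same route as the paper: both arguments unwind the recursion $\psi_n(g^{-1})=\psi(g^{-1})\psi_{n-1}(\phi(g^{-1}))$ and both turn on exactly the observation you isolate, namely that $\psi_{n-1}(\phi(g^{-1}))=\psi_{n-1}(\phi(g)^{-1})$ because $\psi_{n-1}$ has abelian image (the paper writes this as $\psi_{k-1}(g^{-1}\psi(g))=\psi_{k-1}(\psi(g)g^{-1})$). The only cosmetic difference is that you first establish $\phi_n=\phi_{n-1}\circ\phi$ and then iterate, whereas the paper applies the inductive hypothesis $\phi_{k-1}=\phi^{k-1}$ directly inside the same computation.
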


\begin{proof} 
	[Proof (of \ref{lemmy}).] The result clearly holds for $n=0,1$. Suppose $\phi_{k-1}=\phi^{k-1}$. Then for $g\in G$ we have
	\begin{align*}
		\phi_k(g) &= g\psi_k(g^{-1})\\
			&= g \psi(g^{-1})\psi_{k-1}(\phi(g^{-1}))\\
			&=\phi(g)\psi_{k-1}(g^{-1}\psi(g))\\
			&=\phi(g)\psi_{k-1}(\psi(g)g^{-1})\\
			&=\phi(g)\psi_{k-1}(\phi(g)^{-1})\\
			&=\phi^{k-1}(\phi(g))\\
			&=\phi^k(g)
	\end{align*}
and the identity is established.
\end{proof}

\begin{proof} 
	[Proof (of \ref{proppy}).]
	Since $\psi_n\in\Ab(G)$, by Proposition \ref{bigCirc} it suffices to show that $[G,\phi_n(H)]\le H$, or equivalently by Lemma \ref{lemmy} that $[G,\phi^n(H)]\le H$. Since $H=\fix$ we have $\phi(H)=\{e\}$ and hence $\phi^n(H)=\{e\}$, so $[G,\phi^n(H)]=\{e\}\le H$ and $\fix$ is a strong left ideal of $(G,\circ_n,\cdot)$.  
\end{proof}

Thus, we may use $H=\fix$ to construct multiple bracoids.

\begin{corollary}
	Let $\psi\in \Ab(G)$, and $\psi_n,\circ_n$ as above. Then $(G,\cdot, \phi^n(G),\cdot,\odot_n)$ is a bracoid for all $n\ge 0$, where $g\odot_n \phi^n(x) = \phi^n(gx)$.
\end{corollary}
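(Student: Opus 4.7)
The plan is to replay the argument of Example \ref{fix} with $\psi_n$ playing the role of $\psi$, using the two facts that each $\psi_n$ lies in $\Ab(G)$ (noted at the beginning of the section, and proved in \cite{Koch22}) and that $\phi^n=\phi_n$ (Lemma \ref{lemmy}). Concretely, I would set $H_n=\ker\phi_n$, which coincides with the fixed-point set of $\psi_n$ by the same argument as in Example \ref{fix}. Then $\phi_n(H_n)=\{e\}$, so $[G,\phi_n(H_n)]=\{e\}\leq H_n$; Proposition \ref{bigCirc}, applied with $\psi_n$ in place of $\psi$, identifies $H_n$ as a strong left ideal of $(G,\circ_n,\cdot)$, and Corollary \ref{bigCirccor} then produces the bracoid $(G,\cdot,G/H_n,\circ_n,\odot)$ in which $g\odot xH_n=gxH_n$.

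To convert this into the bracoid advertised in the corollary, I would invoke Lemma \ref{lemmy} to identify $\phi_n$ with $\phi^n$ and apply the first isomorphism theorem to the homomorphism $\phi_n:(G,\circ_n)\to(G,\cdot)$. This supplies an isomorphism
\[
\widetilde{\phi^n}:(G,\circ_n)/H_n\longrightarrow \phi^n(G)\leq(G,\cdot),\qquad \widetilde{\phi^n}(xH_n)=\phi^n(x),
\]
through which $\phi^n(G)$ inherits the operation $\cdot$ from $(G,\cdot)$. Transporting the bracoid structure along $\widetilde{\phi^n}$ preserves the bracoid relation and rewrites the action as
\[
g\odot_n\phi^n(x)=\widetilde{\phi^n}(gxH_n)=\phi^n(gx),
\]
giving precisely $(G,\cdot,\phi^n(G),\cdot,\odot_n)$.

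No step presents a substantive obstacle; the entire argument is a direct reprise of Example \ref{fix}, with Lemma \ref{lemmy} absorbing the dependence on $n$. If anything, the only point requiring care is the observation that the ``correct'' kernel to quotient by is $\ker\phi_n$ (i.e.\ the fixed points of $\psi_n$) rather than $\fix\psi$ itself, so that the induced map to $\phi^n(G)$ is an isomorphism rather than merely a surjection; this is automatic once one adopts $\psi_n$ as the abelian map under consideration.
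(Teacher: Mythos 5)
Your argument is correct, and its ingredients (Example \ref{fix}, the fact that each $\psi_n\in\Ab(G)$, and Lemma \ref{lemmy}) are exactly the ones the paper assembles in this section; but you make one choice that differs substantively from the paper's stated derivation, and it is the right one. The paper presents the corollary as flowing from Proposition \ref{proppy}, which exhibits the \emph{single} subgroup $H=\fix$ as a strong left ideal of $(G,\circ_n,\cdot)$ for every $n$; via Corollary \ref{bigCirccor} that yields the bracoids $(G,\cdot,G/\fix,\circ_n,\odot)$. However, $\fix=\ker\phi$ need not equal $\ker\phi^n$ for $n\ge 2$, so the induced map from $(G,\circ_n)/\fix$ to $\phi^n(G)$ is in general only a surjection, and these bracoids are not literally the ones advertised: in the paper's own $D_4\times D_4$ example one has $|G/\fix|=32$ while $|\phi^2(G)|=16$. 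To land on the bracoid in the stated form, with underlying group $(\phi^n(G),\cdot)$ and action $g\odot_n\phi^n(x)=\phi^n(gx)$, one must quotient by the full kernel $\ker\phi_n=\ker\phi^n=\{h\in G:\psi_n(h)=h\}$, which is precisely what you do by rerunning Example \ref{fix} with $\psi_n$ in place of $\psi$; only then is $\widetilde{\phi^n}$ an isomorphism and the transport of the bracoid structure legitimate. So the closing caveat in your write-up is not a pedantic aside --- it is the step that makes the corollary, as literally stated, follow. What the paper's route buys instead is a uniform strong left ideal independent of $n$ (useful for Proposition \ref{proppy} itself); what your route buys is the actual identification with $\phi^n(G)$.
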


\begin{example}
	Let $G=D_4\times D_4 = \gen{r,s:r^4=s^2=rsrs=e}\times\gen{t,u:t^4=u^2=tutu=e}$ and define $\psi\in\Ab(G)$ by $\psi(r)=\psi(t)=e,\ \psi(s)=u,\ \psi(u)=s$. Then $\fix = \gen{su}$. It can be quickly computed that
	\[
		\phi(G)=\gen{r,t,su},\ \phi^n(G)=\gen{r,t} \text{ for } n\ge 2,
	\]
	thereby giving two bracoids.
\end{example}

While braces give non-degenerate solutions to the Yang-Baxter equation, bracoids in general do not. However, under special circumstances one can construct right non-degenerate solutions. 

\begin{proposition}\label{YBE1}
	Let $\psi\in\Ab(G)$ be idempotent. Then 
	\[ R(x,y)=(\psi(x)\phi(y)\psi(x^{-1}),\psi(x)\phi(y)^{-1}\phi(x^{-1})^{-1}y),\ x,y\in G \]
	is a right non-degenerate solution to the Yang-Baxter equation. 
\end{proposition}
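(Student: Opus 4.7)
The plan is to realize $R$ as the right non-degenerate solution produced by applying the construction of \cite{ColazzoKochMartin-LyonsTruman24} to the bracoid $(G,\cdot,\phi(G),\cdot,\odot)$ of Example~\ref{fix}. The key enabling feature is the semidirect-product structure forced by idempotency: since $\psi^2=\psi$, one checks that $\phi(G)=\ker\psi\nrm(G,\cdot)$, that $\fix\cap\phi(G)=\{e\}$, and that every $g\in G$ factors uniquely as $g=\psi(g)\cdot(\psi(g)^{-1}g)$ with $\psi(g)\in\fix$ and $\psi(g)^{-1}g\in\phi(G)$. Thus $G=\fix\ltimes\phi(G)$ as an internal semidirect product, with $\psi$ and $\phi$ the associated projections---this is precisely the setting in which the technique of \cite{ColazzoKochMartin-LyonsTruman24} applies.

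With the hypothesis in place, the next step is to verify that the formula for $R$ matches the solution supplied by that construction. Writing $R(x,y)=(\lambda_x(y),\rho_y(x))$, a short calculation using $\psi^2=\psi$ and $\phi(h)=h\psi(h^{-1})$ yields
\[
\phi(\psi(x)y)=\psi(x)y\psi(y^{-1})\psi(x)^{-1}=\psi(x)\phi(y)\psi(x)^{-1},
\]
so the first coordinate is $\lambda_x(y)=\psi(x)\odot\phi(y)$, namely the value of the bracoid action. The second coordinate is then $\lambda_x(y)^{-1}xy$, so $R$ automatically satisfies $\lambda_x(y)\cdot\rho_y(x)=xy$. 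Right non-degeneracy follows at once: applying $\psi$ to $z=\rho_y(x)$ and using $\psi(\phi(y)^{\pm1})=e$ together with $\psi^2=\psi$ gives $\psi(z)=\psi(x)\psi(y)$, so $\psi(x)=\psi(z)\psi(y)^{-1}$ is determined by $y$ and $z$; with $\psi(x)$ known, the defining equation for $\rho_y(x)$ solves uniquely for $x$, and $\rho_y$ is a bijection.

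The substantive step is verifying the braid equation itself. The cleanest route is to quote the relevant theorem of \cite{ColazzoKochMartin-LyonsTruman24} once the semidirect-product hypothesis above is checked. If instead one has to verify the braid equation by hand, the calculation is carried out on a general triple $(x,y,z)\in G^3$, repeatedly using the normality of $\phi(G)$, idempotency, and the fact that $\psi(G)$ is abelian, which ensures that the conjugations appearing in iterated compositions of $\lambda$ align and that all commutator discrepancies collapse. I expect this verification---rather than the routine steps above---to be the principal obstacle.
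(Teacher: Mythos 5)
Your proposal is correct and follows essentially the same route as the paper: both reduce to \cite[Prop.~4.2]{ColazzoKochMartin-LyonsTruman24} by exhibiting $K=\phi(G)$ as a subgroup acting regularly on $\phi(G)$ in the bracoid $(G,\cdot,\phi(G),\cdot,\odot)$ of Example~\ref{fix}, and then read off $\lambda$ and $\rho$ from that construction. Your justification of regularity via the decomposition $G=\fix\ltimes\ker\psi$ with $\phi(G)=\ker\psi$ (rather than the paper's direct transitivity computation $\left(\phi(y)\phi(x)^{-1}\right)\odot\phi(x)=\phi(y)$), and your hands-on verification of right non-degeneracy, are correct but only cosmetic variations; the closing hedge about verifying the braid equation by hand is unnecessary, since the cited proposition already supplies it once the ``contains a brace'' hypothesis is checked.
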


\begin{proof}
	By \cite[Prop. 4.2]{ColazzoKochMartin-LyonsTruman24} it suffices to show that $(G,\cdot, \phi(G), \cdot,\odot)$ \textit{contains a brace}, that is, that there is a subgroup $K\le (G,\cdot)$ such that $(K,\cdot,\phi(G), \cdot, \odot)$  is a bracoid where $K$ acts regularly on $ \phi(G) $. However, here we can simply let $K=\phi(G)\le (G,\cdot)$. Then $(\phi(G),\cdot,\phi(G),\cdot,\odot)$ is a bracoid since the restriction of $\odot$ to $\phi(G)\le G$ is transitive: for $\phi(x),\phi(y)\in \phi(G)$ we have $\phi(y)\phi(x)^{-1}\in\phi(G)$ and
	\begin{align*}
		\left(\phi(y)\phi(x)^{-1}\right)\odot \phi(x) &= \phi\left(\phi(y)\phi(x)^{-1}\phi(x)\right)\\
		&= \phi(\phi(y))\\
		&= y\psi(y^{-1})\psi\left( y\psi(y^{-1})\right)\\
		&= y\psi(y^{-1})\psi(y)\psi(y^{-1})\\
		&=\phi(y).
	\end{align*}
	Thus, $(\phi(G),\cdot,\phi(G),\cdot,\odot)$ is a brace where $\phi(G)$ acts on itself regularly.
	
	As $(G,\cdot,\phi(G),\cdot,\odot)$ contains a brace we may obtain the precise solution following the explicit computations in \cite[\S 4]{ColazzoKochMartin-LyonsTruman24}. Explicitly, if we write $R(x,y) = (\lambda_x(y),\rho_y(x))$ then
	\begin{align*}
		\lambda_x(y) &= (x\odot e)^{-1}(x\odot (y\odot e)) \\
		&= \phi(x)^{-1}(x\odot \phi(y))\\
		&= \phi(x)^{-1} \phi(xy) \\
		&= (\psi(x)x^{-1})(xy\psi(xy^{-1}))\\
		&= \psi(x)\phi(y)\psi(x^{-1})
	\end{align*} 
	and 
	\[
		\rho_y(x) = (\lambda_x(y))^{-1}xy= (\psi(x)\phi(y)\psi(x^{-1}))^{-1} xy = \psi(x)\phi(y)^{-1}\phi(x^{-1})^{-1}y.
	\]
\end{proof}

\begin{remark}
	The map $R$ above is left-non-degenerate only if $\fix=\{e\}$, that is, $\psi$ is fixed-point-free. To see this, notice that if $y\in\fix$ then $\lambda_x(y) = e$, hence $\lambda_x$ cannot be injective if $\psi$ contains fixed points. Alternatively, observe 
	\[\lambda_x(y) = \psi(x)\phi(y)\psi(x^{-1})=\psi(x)x^{-1} xy\psi(y^{-1})\psi(x^{-1}) = \phi(x)^{-1}\phi(xy)^{-1}\in\phi(G),\]
	so the image of the first component of $r$ must be in $\phi(G)$. 
	
	On the other hand, if $\psi$ is fixed-point-free then $\fix$ is trivial, $\phi(G)=G$, and the bracoid is simply the brace $(G,\circ,\cdot)$. 
\end{remark}

Unfortunately, we can not call on Proposition \ref{proppy} to generate further solutions: notice that if $\psi\in\Ab(G)$ is idempotent then
\[\phi^2(g) = \phi(g)\psi(\phi(g)) = \phi(g) \]
and hence the bracoids $(G,\cdot,\phi(G),\cdot,\odot_1)$ and $(G,\cdot,\phi^2(G),\cdot,\odot_2)$ are the same.

\section{Strong left ideals of $(G,\cdot,\circ)$}

Having described the strong left ideals of $(G,\circ,\cdot)$, we now consider the other interpretation of the bi-skew braces constructed from abelian maps and find the strong left ideals of $(G,\cdot,\circ)$. As we will see, the condition that a subgroup be a strong left ideal of $(G,\cdot,\circ)$ is very easy to understand, however many of these turn out to be left ideals as well.

\begin{proposition}\label{bigDot}
	Let $\psi\in\Ab(G)$, and suppose that $H\nrm(G,\cdot)$. Then $H$ is a strong left ideal of both $(G,\cdot,\circ)$ and $(G,\cdot',\circ)$. Furthermore, $H$ is an ideal of each brace (hence, both braces)  if and only if $[G,\phi(H)]\le H$.
\end{proposition}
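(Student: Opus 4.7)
The plan is to dispose of the strong-left-ideal claims first and then tackle the ideal characterization. The $(G,\cdot)$-normality required for the strong left ideal property is given, so what remains is to verify $H\le(G,\circ)$ and that the relevant $\gamma$-map preserves $H$. Using the formula $g\circ h = \phi(g)h\psi(g)$, I would compute $\gamma(g)[h] = g^{-1}(g\circ h) = \psi(g^{-1})h\psi(g)$, exhibiting $\gamma(g)[h]$ as a $(G,\cdot)$-conjugate of $h$ by $\psi(g)\in G$, hence in $H$. Closure of $H$ under $\circ$ follows from $h_1\circ h_2 = h_1\cdot(\psi(h_1^{-1})h_2\psi(h_1))$, which is a product of $H$-elements by normality, and $\overline{h} = \psi(h)h^{-1}\psi(h^{-1})$ is a $\psi(h)$-conjugate of $h^{-1}$ and so also lies in $H$.

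For $(G,\cdot',\circ)$ the normality is automatic since $(G,\cdot)$ and $(G,\cdot')$ share their normal subgroups. The relevant operator evaluates to $\gamma'(g)[h] = g^{-1}\cdot'(g\circ h) = (g\circ h)\cdot g^{-1} = g\cdot(\psi(g^{-1})h\psi(g))\cdot g^{-1}$, a $(G,\cdot)$-conjugate of the $H$-element already identified above.

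For the ideal claim, recall that an ideal is a strong left ideal with the additional requirement $H\nrm(G,\circ)$. I would invoke the identity
\[g\circ h\circ\overline{g} = [\phi(g),\phi(h)]\,h\]
established in the proof of Proposition \ref{bigCirc}, so that $H\nrm(G,\circ)$ is equivalent to $[\phi(G),\phi(H)]\le H$. The task then reduces to showing that, under $H\nrm(G,\cdot)$, this containment is equivalent to $[G,\phi(H)]\le H$. One direction is free since $\phi(G)\le G$.

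The main obstacle is the converse. My plan is to write a general $g\in G$ as $g=\phi(g)\psi(g)$ and expand via the commutator identity $[ab,c] = a[b,c]a^{-1}[a,c]$ to get
\[[g,\phi(h)] = \phi(g)\,[\psi(g),\phi(h)]\,\phi(g)^{-1}\cdot[\phi(g),\phi(h)].\]
The second factor is in $H$ by hypothesis. For the first, I would expand $\phi(h) = h\psi(h^{-1})$ via $[a,bc]=[a,b]\,b[a,c]b^{-1}$, noting that $[\psi(g),\psi(h^{-1})]=e$ since $\psi(G)$ is abelian; this leaves $[\psi(g),\phi(h)] = [\psi(g),h]$, which lies in $H$ by normality of $H$ in $(G,\cdot)$. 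A final conjugation by $\phi(g)$, again absorbed by normality, puts $[g,\phi(h)]$ into $H$ and completes the equivalence.
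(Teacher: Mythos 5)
Your proposal is correct and follows essentially the same route as the paper: the strong left ideal claims come from the computations $g^{-1}(g\circ h)=\psi(g^{-1})h\psi(g)$ and $(g\circ h)g^{-1}=\phi(g)h\phi(g)^{-1}$ together with normality, and the ideal characterization rests on the identity $g\circ h\circ\overline g=[\phi(g),\phi(h)]h$ plus the observation that $[\psi(g),\phi(h)]\in H$ automatically when $H\nrm(G,\cdot)$. The only cosmetic difference is the direction of the commutator expansion (you decompose $[g,\phi(h)]$ via $g=\phi(g)\psi(g)$, the paper decomposes $[\phi(g),\phi(h)]$ via $\phi(g)=g\psi(g^{-1})$), and you handle the opposite brace's ideal condition at once by noting it reduces to the same requirement $H\nrm(G,\circ)$, where the paper runs a second computation with the $\cdot'$-commutator.
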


\begin{proof}
	In the first case, to show $H$ is a strong left ideal we require that $\gamma_{\bullet}: G\to \Perm(G)$ given by $\gamma_{\bullet}(g)[h]=g^{-1}(g\circ h)$ satisfies $\gamma_{\bullet}(g)[H]\subseteq H$. But for all $g\in G,\ h\in H$ we have
	\[\gamma_{\bullet}(g)[h] = g^{-1}(g\circ h) = g^{-1}\left(g\psi (g^{-1})h\psi(g)\right) = \psi(g^{-1})h\psi(g) \in H\] 
	by normality. Thus $H$ is a strong left ideal of $(G,\cdot,\circ)$. The proof that $H$ is a strong left ideal of $(G,\cdot',\circ)$ is similar since $g^{-1}\cdot'(g\circ h)=\phi(g)h\phi(g)^{-1}\in H$.
	
	Now $H$ is an ideal of $(G,\cdot,\circ)$ if and only if $H\nrm(G,\circ)$. We have
	\begin{align*}
		g\circ h \circ \overline g &= \phi(g)\phi(h)\phi(g)^{-1}\psi(g^{-1})\psi(h)\psi(g)\\
		&= \phi(g)\phi(h)\phi(g)^{-1}\psi(h)\\
		&= [\phi(g),\phi(h)]h,
	\end{align*}
	hence $H$ is an ideal of $(G,\cdot,\circ)$ if and only if $[\phi(G),\phi(H)]\le H$. But for any $g\in G,\ h\in H$ we have
	\[
		[\phi(g),\phi(h)] = [g\psi(g^{-1}),\phi(h)] = g[\psi(g^{-1}),\phi(h)][\phi(h),g^{-1}]g^{-1},\]
	and since $H\nrm(G,\cdot)$ we see that  $[\phi(g),\phi(h)]\in H$ if and only if $[\psi(g^{-1}),\phi(h)][\phi(h),g^{-1}]\in H$. But since
	\[[\psi(g^{-1}),\phi(h)] =\psi(g^{-1})h\psi(h^{-1})\psi(g)\psi(h)h^{-1}=\psi(g^{-1})h\psi(g)h^{-1}\in H\]
	by normality with respect to $ \cdot $ we see that $[\phi(g),\phi(h)]\le H$ if and only if $[\phi(h),g^{-1}]\in H$ and the conclusion quickly follows for $(G,\cdot,\circ)$. 
	
	Finally, let $[a,b]'$ denote the commutator in the group $(G,\cdot')$. Then $[a,b]'=[b^{-1},a^{-1}]$, hence 
	\[g\circ h \circ \overline g = h\cdot' [\phi(h)^{-1},\phi(g)^{-1}]'\] and the argument is similar to the one above.

\end{proof}

Once again, by \cite[Prop. 2.4]{Martin-LyonsTruman23} we get:

\begin{corollary}\label{bigDotcor}
	Let $\psi\in\Ab(G)$, $H\nrm G$. Define an action $\odot$ of $(G,\circ)$ on $G/H$ by $g\odot xH = (g\circ x)H$. Then $(G,\circ,G/H,\cdot,\odot)$ is a bracoid, as is $(G,\circ,G/H,\cdot',\odot)$. Furthermore, if $[G,\phi(H)]\not\le H$ then neither $(G,\circ,G/H,\cdot,\odot)$ nor $(G,\circ,G/H,\cdot',\odot)$  reduce to braces.
\end{corollary}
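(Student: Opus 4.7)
The existence of the two bracoids is immediate: with $H\nrm G$, Proposition~\ref{bigDot} identifies $H$ as a strong left ideal of both $(G,\cdot,\circ)$ and $(G,\cdot',\circ)$, after which \cite[Prop.~2.4]{Martin-LyonsTruman23} delivers $(G,\circ,G/H,\cdot,\odot)$ and $(G,\circ,G/H,\cdot',\odot)$, each with action $g\odot xH = (g\circ x)H$.

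For the non-reduction claim I would model the argument on Remark~\ref{whocares}. A bracoid of this type reduces to a brace exactly when the kernel $K$ of $\odot$ permits $(G/K,\circ)$ to act regularly on $G/H$; by transitivity this forces $|G/K|=|G/H|$. Since $g\circ e = g$, the stabilizer of $eH$ under $\odot$ is precisely $H$, so $K\subseteq H$, and we need $K=H$. Thus the bracoid reduces to a brace if and only if $H$ acts trivially on $G/H$, i.e., iff $(h\circ x)x^{-1}\in H$ for every $h\in H$ and $x\in G$.

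The remaining step is to translate this trivial-action condition into $[G,\phi(H)]\le H$. Expanding $h\circ x = \phi(h)x\psi(h)$ and using $\phi(h)\psi(h)=h$, the condition becomes $\phi(h)\,x\psi(h)x^{-1}\in H$. Substituting $\psi(h) = \phi(h)^{-1}h$ and expanding the commutator via $[x,ab]=[x,a]\,a[x,b]a^{-1}$, with the normality of $H$ in $(G,\cdot)$ absorbing the $[x,h]$-type contributions, one is left with $[x,\phi(h)]\in H$ for all $x\in G$, $h\in H$. This is precisely $[G,\phi(H)]\le H$, which by Proposition~\ref{bigDot} is the criterion for $H$ to be an ideal of $(G,\cdot,\circ)$ (equivalently of $(G,\cdot',\circ)$). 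The contrapositive yields the non-reduction statement, and both bracoids are handled in one stroke since $\odot$ is the same map in each. The main obstacle I anticipate is the commutator manipulation shuttling between $\psi(h)$ and $\phi(h)$ while carefully tracking which terms land in $H$; analogous moves appear in the proofs of Propositions~\ref{bigCirc} and~\ref{bigDot}, so the necessary techniques are ready at hand.
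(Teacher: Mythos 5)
Your proposal is correct and follows essentially the same route as the paper: Proposition~\ref{bigDot} plus \cite[Prop.~2.4]{Martin-LyonsTruman23} for the existence of the two bracoids, and the ideal criterion $[G,\phi(H)]\le H$ for the non-reduction claim. Your kernel-of-the-action argument (reduction to a brace forces $K=H$, i.e.\ $H$ acting trivially on $G/H$, which unwinds to $[G,\phi(H)]\le H$ via $x^{-1}(h\circ x)=[x^{-1},\phi(h)]\,h$ and normality) is exactly the detail the paper leaves implicit, modeled correctly on Remark~\ref{whocares}.
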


As with the previous case, we present some general examples.

\begin{example}\label{gencase}
	Let $G_1,G_2$ be groups and let $\alpha\in\Ab(G_1,G_2),\ \beta\in\Ab(G_2,G_1)$. Let $G=G_1\times G_2$, and define $\psi:G\to G$ by
	\[\psi(g_1,g_2) = (\beta(g_2),\alpha(g_1)),\ (g_1,g_2)\in G.\]
	It is easy to verify that $\psi\in\Ab(G)$, and hence $(G,\cdot,\circ)$ is a bi-skew brace with
	\[(g_1,g_2)\circ (h_1,h_2) = \left(g_1\beta(g_2^{-1})h_1\beta(g_2),g_2\alpha(g_1^{-1})h_2\alpha(g_1)\right).\]
	Clearly, by a slight abuse of notation, $G_1\nrm G$, and hence $(G,\circ,G/G_1,\cdot,\odot)$ is a bracoid with 
	\[(g_1,g_2)\odot (x_1,x_2)G_1 = \left(g_1\beta(g_2^{-1})x_1\beta(g_2),g_2\alpha(g_1^{-1})x_2\alpha(g_1)\right)G_1\ (g_1,g_2),(x_1,x_2)\in G.\]
	Of course, we can identify $G/G_1$ with $G_2$, thereby giving the bracoid $(G_1\times G_2,\circ,G_2,\cdot,\odot)$ with 
		\[(g_1,g_2)\odot x_2 = g_2\alpha(g_1^{-1})x_2\alpha(g_1),\ (g_1,g_2)\in G,\  x_2\in G_2.\]
	
	Also, we have
	\begin{align*}
		[(g_1,g_2),\phi(h_1,e)] &= [(g_1,g_2),(h_1,e)\psi(h_1,e)]\\
		&= [(g_1,g_2),(h_1,\alpha(h_1))]\\
		&= \left(g_1h_1g_1^{-1}h_1^{-1},g_2\alpha(h_1)g_2^{-1}\alpha(h_1^{-1})\right).
	\end{align*}  

Thus, $G_1$ is an ideal of $(G,\cdot,\circ)$ if and only if $\alpha(G_1)\le Z(G_2)$.

A similar bracoid is obtained starting with $G_2\nrm G$. 

\end{example}

\begin{remark}
	Observe that while $\beta$ plays a role in the construction of the brace $(G,\cdot,\circ)$ in Example \ref{gencase} it does not have any affect in the bracoid. Thus, if we are only interested in the bracoid constructed we can always take $\beta:G_2\to G_1$ to be trivial. In this case $\psi(g_1,g_2)=(e,\alpha(g_1))$ and 
	\[\fix = \{(g_1,g_2)\in G: (g_1,g_2)=(e,\alpha(g_1))\} = \{(e,e)\}.\]
	Thus, $\psi$ is a fixed-point free abelian map, and $\phi:(G,\circ)\to(G,\cdot)$ is an isomorphism. Thus, the bracoids produced in Example \ref{gencase} can always be obtained from a brace whose underlying groups are isomorphic.
	
	As we will see below, the choice of $\beta$ does play a role in the YBE solution we obtain.
\end{remark}

Generally, the isomorphism class of $(G,\circ)$ remains somewhat mysterious. However, we have
\begin{proposition}
	Let $\alpha,\beta\in \Ab(G),\ \psi$ as above. Then $\fix\subseteq Z(G,\circ)$.
\end{proposition}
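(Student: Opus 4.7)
The plan is to show that for any $a \in \fix$ and any $h \in G$, the identity $a \circ h = h \circ a$ holds. The key observation, which seems to be true for \emph{any} $\psi \in \Ab(G)$ (not just the product construction), is that $\fix \subseteq \psi(G)$: if $a \in \fix$ then $a = \psi(a)$, so trivially $a$ lies in the image. Since $\psi(G)$ is abelian by hypothesis, every element of $\fix$ commutes (in $(G,\cdot)$) with every element of $\psi(G)$.

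With this in hand, I would carry out the following short computation. For $a \in \fix$, note that $\phi(a) = a\psi(a^{-1}) = a \cdot a^{-1} = e$. Therefore
\[ a \circ h = \phi(a)\, h\, \psi(a) = h\, a. \]
On the other side,
\[ h \circ a = \phi(h)\, a\, \psi(h) = h\,\psi(h^{-1})\, a\, \psi(h). \]
Because $\psi(h) \in \psi(G)$ and $a \in \psi(G)$ both lie in an abelian subgroup of $(G,\cdot)$, we have $\psi(h)\, a = a\, \psi(h)$, and hence
\[ h \circ a = h\, \psi(h^{-1})\, \psi(h)\, a = h\, a. \]
Comparing the two expressions yields $a \circ h = h \circ a$, so $a \in Z(G, \circ)$ as required.

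There is essentially no obstacle here once one notices the inclusion $\fix \subseteq \psi(G)$. The only subtlety is making sure that one is not accidentally invoking commutativity in the wrong group: the identity $\psi(h)\, a = a\, \psi(h)$ is in $(G,\cdot)$, and both $\phi(a)$ and $\psi(a)$ collapse to the identity (respectively, to $a$) precisely because $a$ is fixed by $\psi$. I would remark after the proof that the argument never uses the specific product-group structure of $\psi$ given in Example~\ref{gencase}, so the conclusion $\fix \subseteq Z(G,\circ)$ in fact holds for every $\psi \in \Ab(G)$.
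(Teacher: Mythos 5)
Your proof is correct, and it is a genuine (if mild) strengthening of what the paper does. The paper's own proof works inside the product setting of Example~\ref{gencase}: for $(g_1,g_2)\in\fix$ one has $g_1=\beta(g_2)$ and $g_2=\alpha(g_1)$, so each coordinate of a fixed point lies in the abelian image of $\beta$ or of $\alpha$; consequently the conjugations by $\beta(h_2)$ and $\alpha(h_1)$ occurring in $(h_1,h_2)\circ(g_1,g_2)$ act trivially, and both $(g_1,g_2)\circ(h_1,h_2)$ and $(h_1,h_2)\circ(g_1,g_2)$ collapse to $(h_1g_1,h_2g_2)$. Your argument isolates exactly the same mechanism --- $\fix\subseteq\psi(G)$, $\psi(G)$ abelian, hence $\psi(h)$ centralizes $a$ while $\phi(a)=e$ and $\psi(a)=a$ --- but runs it for an arbitrary $\psi\in\Ab(G)$ on an arbitrary group, never invoking the decomposition $G=G_1\times G_2$. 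What this buys is the cleaner and more general statement that $a\circ h=h\circ a=ha$ for every $a\in\fix$ and every abelian map $\psi$, of which the proposition as stated is the special case $\psi(g_1,g_2)=(\beta(g_2),\alpha(g_1))$; the paper's component-wise computation buys nothing extra here beyond matching the notation of the surrounding examples. Your closing remark that the product structure is never used is accurate and worth recording.
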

\begin{proof}
	Let $(g_1,g_2)\in\fix$. Then we have, for $(h_1,h_2)\in G$,
	\begin{align*}
		(g_1,g_2)\circ (h_1,h_2) &= (g_1\beta(g_2^{-1}))h_1\beta(g_2),g_2\alpha(g_1^{-1}))h_2\alpha(g_1)) \\
		&= (g_1 g_1^{-1} h_1 g_1,g_2 g_2^{-1} h_2 g_2)\\
		&=(h_1g_1,h_2,g_2)
	\end{align*}
	while
	\begin{align*}
		(h_1,h_2)\circ (g_1,g_2) &= (h_1\beta(h_2^{-1}))g_1\beta(h_2),h_2\alpha(h_1^{-1}))g_2\alpha(h_1))\\
		 &= (h_1\beta(h_2^{-1})\beta(g_2)\beta(h_2),h_2\alpha(h_1^{-1})\alpha(g_1)\alpha(h_1))\\
		&=(h_1\beta(g_2),h_2\alpha(g_1))\\
		&=(h_1g_1,h_2,g_2).
	\end{align*}
\end{proof}

\begin{example}\label{permy}
	Let $G$ be any abelian group, and let $\Perm(G)$ be the group of permutations of $G$. Let $\alpha:G \to \Perm(G)$ be given by $\alpha(a) = \lambda(a)$ (that is, left regular representation), and let $\beta$ be trivial. Then $\alpha(A)$ is non-central, giving the bracoid $(G\times \Perm(G),\circ,\Perm(G),\cdot,\odot)$ with 
	\[(g,\sigma) \odot \tau = \sigma\lambda(g^{-1})\tau\lambda(g).\]
\end{example}

\begin{example}\label{rep}
	Let $G$ be any abelian group, and let $\varrho:G\to \GL_n(F)$ be a representation of $G$ for some field $F$. Then $(G\times \GL_n(F),\circ,\GL_n(F),\cdot,\odot)$ is a bracoid with 
	\[(g,A) \odot B = A\varrho(g^{-1})B\varrho(g).\]
\end{example}

\begin{example}\label{possiblyabridgetoofar}
	Let $\{G_i : i\in \Z_n\}$ and let $\alpha_i\in \Ab(G_i,G_{i+1})$ for each $i\in\Z_n$. Let $G=\prod_{i\in\Z_n} G_i$ and define $\psi\in\Ab(G)$ by 
	\[\psi\left( \prod_{i\in \Z_n} g_i\right)=\prod_{i\in \Z_n} \alpha_{i-1}(g_i). \]
	This gives a bi-skew brace $(G,\cdot,\circ)$, and if we let $H=G_0$ then 
	\begin{align*}
		\left[\left(\zprod g_i\right), \phi(h_0,0,\dots,0)\right] &= \left[\left(\zprod g_i\right), (h_0,0,\dots,0)\psi((h_0,0,\dots,0)^{-1})\right]\\
	&	= \left[\left(\zprod g_i\right), (h_0,\alpha_0(h_0),0,\dots,0)\right], 
	\end{align*}
	and we see that $[G,\phi(H)]\le H$ if and only if $\alpha_0(H)\in Z(G_1)$. Thus we get a bracoid which does not reduce to a brace if and only if $\alpha_0(G_0)\not\in Z(G_1)$.
	
\end{example}

\section{Solutions to the {Y}ang-{B}axter equation from Example \ref{gencase}}

We return to the case where $G=G_1\times G_2,\ \alpha\in\Ab(G_1,G_2),\ \beta\in\Ab(G_2,G_1)$ and $H=G_1$. Then $(G_1\times G_2,\circ,G_2,\cdot,\odot)$ is a bracoid with $(g_1,g_2)\odot x_2 = g_2\alpha(g_1)^{-1}x_2\alpha(g_1)$ as before. Since $(e,x_2)\circ (e,y_2) = (e,x_2y_2)$ we have $G_2\le (G_1\times G_2,\circ)$, and the action restricted to $G_2$ is simply
\[(e,g_2)\odot x_2 = g_2x_2.\]
This is evidently a transitive action, hence $(G_1\times G_2,\circ,G_2,\cdot,\odot)$ contains a brace. Applying \cite[Prop. 4.2]{ColazzoKochMartin-LyonsTruman24} will give us the following. 

\begin{proposition}\label{YBE2}
	With the notation above, write $\vec x=(x_1,x_2)$ and $\vec y=(y_1,y_2)$. Let  
	\begin{align*}\lambda_{\vec x}(\vec y) &= (e,\alpha(x_1^{-1})y_2\alpha(x_1))\\
		\rho_{\vec y}(\vec x)&=\left(\beta(y_2)x_1\beta(x_2^{-1})y_1\beta(x_2y_2^{-1}),\alpha(x_1)^{-1}y_2^{-1}\alpha(x_1)x_2\alpha(x_1)^{-1}y_2\alpha(x_1)\right)
		\end{align*}
	Then $R(\vec x, \vec y)=(\lambda_{\vec x}(\vec y), \rho_{\vec y}(\vec x))$ is a right non-degenerate solution to the Yang-Baxter equation.
\end{proposition}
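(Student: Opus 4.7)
The strategy is to apply \cite[Prop.~4.2]{ColazzoKochMartin-LyonsTruman24} exactly as in the proof of Proposition \ref{YBE1}. The paragraph preceding the statement has already done the conceptual work: the subgroup $K:=\{e\}\times G_2$ sits inside $(G_1\times G_2,\circ)$ because $(e,x_2)\circ(e,y_2)=(e,x_2y_2)$, and the restricted action $(e,g_2)\odot x_2=g_2x_2$ is the (regular) left-multiplication of $G_2$ on itself. Hence $(K,\circ,G_2,\cdot,\odot)$ is a brace contained in the bracoid, and \cite[Prop.~4.2]{ColazzoKochMartin-LyonsTruman24} delivers a right non-degenerate solution on $G\times G$ whose data are given by
\[
\lambda_{\vec x}(\vec y) \;=\; (\vec x\odot e)^{-1}\bigl(\vec x\odot(\vec y\odot e)\bigr),\qquad
\rho_{\vec y}(\vec x) \;=\; \overline{\lambda_{\vec x}(\vec y)}\circ \vec x\circ \vec y,
\]
the first expression being computed in $N=G_2$ and then re-identified with the corresponding element of $K\subseteq G$ via $g_2\leftrightarrow(e,g_2)$. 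The remainder of the proof is just unpacking these formulas.

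For $\lambda$, using $\vec y\odot e=y_2$, $\vec x\odot y_2=x_2\alpha(x_1)^{-1}y_2\alpha(x_1)$, and $\vec x\odot e=x_2$, one cancellation in $G_2$ gives $\lambda_{\vec x}(\vec y)=\alpha(x_1)^{-1}y_2\alpha(x_1)$, which lifts to $(e,\alpha(x_1^{-1})y_2\alpha(x_1))\in K$, as claimed. For $\rho$, I would note first that $\overline{(e,g_2)}=(e,g_2^{-1})$ in $(G,\circ)$, since $\circ$ restricted to $K$ is coordinate-wise multiplication. Substituting into the product formula for $\circ$ from Example \ref{gencase} and using $\alpha(e)=e$ makes the second coordinate collapse immediately to $\alpha(x_1)^{-1}y_2^{-1}\alpha(x_1)\cdot x_2\alpha(x_1)^{-1}y_2\alpha(x_1)$, matching the claim.

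The main obstacle is the first coordinate of $\rho$, which after substitution reads
\[
\beta\bigl(\alpha(x_1)^{-1}y_2\alpha(x_1)\bigr)\cdot x_1\beta(x_2^{-1})y_1\beta(x_2)\cdot\beta\bigl(\alpha(x_1)^{-1}y_2^{-1}\alpha(x_1)\bigr).
\]
Here the key ingredient is that $\beta(G_2)\le G_1$ is abelian: each of the two outer $\beta$-blocks is a product of three elements of the abelian group $\beta(G_2)$, so the flanking $\beta(\alpha(x_1)^{\pm 1})$ factors annihilate each other and the outer blocks simplify to $\beta(y_2)$ and $\beta(y_2^{-1})$ respectively. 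Absorbing $\beta(x_2)\beta(y_2^{-1})=\beta(x_2y_2^{-1})$ then yields the displayed expression. Right non-degeneracy is automatic from \cite[Prop.~4.2]{ColazzoKochMartin-LyonsTruman24} and needs no separate verification.
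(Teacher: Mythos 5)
Your proposal is correct and follows essentially the same route as the paper: both invoke \cite[Prop.~4.2]{ColazzoKochMartin-LyonsTruman24} via the contained brace $\{e\}\times G_2$ acting regularly on $G_2$, and then unpack $\lambda_{\vec x}(\vec y)=(\vec x\odot e)^{-1}(\vec x\odot(\vec y\odot e))$ and $\rho_{\vec y}(\vec x)=\overline{\lambda_{\vec x}(\vec y)}\circ\vec x\circ\vec y$ using the explicit formula for $\circ$. Your extra remark that the conjugating factors $\beta(\alpha(x_1)^{\pm1})$ cancel because $\beta(G_2)$ is abelian is exactly the step the paper leaves implicit in its final line, and it is verified correctly.
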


\begin{proof}
	We simply use the technique of \cite[\S 4]{ColazzoKochMartin-LyonsTruman24}, adapting the notation since here since $G$ is viewed as a group under $\circ$. We have
	\begin{align*}
		\lambda_{\vec x}(\vec y) &= \left( e,(\vec x \odot e)^{-1}\right)\circ \left(e, (\vec x \odot \vec y\odot e)\right) \tag{since $\overline {(e,g_2)}=(e,g_2)^{-1}$}\\
		&=\left( (e, x_2^{-1})\right)  \circ \left(e, (\vec x \odot y_2)\right)\\
		&=  \left((e,x_2^{-1})\right) \circ \left(e, x_2\alpha(x_1)^{-1}y_2\alpha(x_1)\right) \\
		&= \left(e, \alpha(x_1)^{-1}y_2\alpha(x_1)\right)
	\end{align*}
	and
	\begin{align*}
		\rho_{\vec y}(\vec x) &= \overline{\lambda_{\vec x}(\vec y)} \circ \vec x \circ \vec y \\
		&=\left(e, \alpha(x_1)^{-1}y_2^{-1}\alpha(x_1)\right) \circ \left(x_1\beta(x_2^{-1})y_1\beta(x_2), x_2\alpha(x_1)^{-1}y_2\alpha(x_1)\right)\\
		&=\left(\beta(y_2)x_1\beta(x_2^{-1})y_1\beta(x_2y_2^{-1}),\alpha(x_1)^{-1}y_2^{-1}\alpha(x_1)x_2\alpha(x_1)^{-1}y_2\alpha(x_1)\right),
	\end{align*}
\end{proof}
giving the desired solution.

\begin{example}
	Return to Example \ref{rep}. Then $\alpha=\varrho$ and $\beta$ is trivial, giving
	\[R((g,A),(h,B)) = \left((e,\varrho(g^{-1})B\varrho(g)),(gh,\varrho(g)^{-1}B^{-1}\varrho(g)A\varrho(g^{-1})B\varrho(g) \right).\]
\end{example}

\begin{example}
	Let $G=C_8 \times S_4$ where $C_8 = \gen g$ is cyclic of order $8$. Let $\pi=(1234)\in S_4$, and define $\alpha\in\Ab(C_8,S_4)$ by $\alpha(i) = \pi^i$; furthermore, define $\beta\in\Ab(S_4,C_8)$ by 
	\[\beta(\sigma) = \begin{cases}
		e & \sigma\in A_4\\
		g^4 & \sigma\notin A_4
	\end{cases}. \]
	For brevity we will write $g^i\sigma$ for the element $(g^i,\sigma)$. The resulting YBE solution is
	\[R(g^i\sigma,g^j\tau) = \begin{cases}
		\left(\pi^{-i}\tau\pi^i, g^{i+j}\pi^{-i}\tau^{-1}\pi^i\sigma\pi^{-i}\tau\pi^i \right) & \tau\in A_4\\
		\left(\pi^{-i}\tau\pi^i, g^{i+j+4}\pi^{-i}\tau^{-1}\pi^i\sigma\pi^{-i}\tau\pi^i \right) & \tau\notin A_4
	\end{cases}.\]
\end{example}

	The works of \cite{Koch21,Koch22} construct braces starting from a nonabelian group $G$. Strictly speaking, that $G$ be nonabelian is not necessary, however if $(G,\cdot)$ is abelian and $\psi\in\End(G)$ we have
	\[g\circ h = g\psi(g^{-1})h\psi(g)=gh\]
	and hence the brace obtained $(G,\cdot,\cdot)$ is trivial.
	
	We conclude this paper by observing that (abelian) maps on abelian groups can give interesting solutions to the Yang-Baxter equation.

	Let $G$ be an abelian group, and let $\psi\in\End(G)=\Ab(G)$. Then $\phi$ is also an endomorphism, and $\phi(G)\le G$ since $\phi(x)\phi(y)=\phi(xy)$. This gives the bracoid $(G,\cdot,\phi(G),\cdot,\odot)$ where $g\odot\phi(x) = \phi(g)\phi(x)$. In this case, of course, $\fix$ is an ideal of $(G,\cdot,\phi(G),\cdot,\odot)$ 
	
	Now suppose $\psi$ is idempotent. By Proposition \ref{YBE1} we get 
	\[R(x,y)= (\phi(y),\psi(y)x) \]
	is a solution to the Yang-Baxter equation. Additionally, as $\phi\in\Ab(G)$ and $\psi(x) = x\phi(x^{-1})$ we obtain an additional solution
	\[R'(x,y)= (\psi(y),\phi(y)x). \]
	
	Thus, idempotent maps on abelian groups can be used to find solutions.
	
%
%
%
%

\bibliographystyle{alpha} 
\bibliography{MyRefs}
\end{document}